\newtheorem{theorem}{Theorem}[section]
\newtheorem{corollary}[theorem]{Corollary}
\newtheorem{lemma}[theorem]{Lemma}
\newtheorem{remark}[theorem]{Remark}
\newtheorem{summary}[theorem]{Summary}
\numberwithin{equation}{section}
\begin{document}

\title[Geometric representation of $\mathbb{L}$-homology classes]{On geometric representation of $\mathbb{L}$-homology classes} 
\author[F. Hegenbarth and D.D. Repov\v{s}]{Friedrich Hegenbarth and Du\v{s}an D. Repov\v{s}}
\address{Dipartimento di Matematica "Federigo Enriques", Universit\` a degli studi di Milano, 20133 Milano, Italy}\email{friedrich.hegenbarth@unimi.it} 
\address{Faculty of Education and Faculty of Mathematics and Physics, University of Ljubljana \& Institute of Mathematics, Physics and Mechanics, 1000 Ljubljana, Slovenia}\email{dusan.repovs@guest.arnes.si}
\begin{abstract}  
In this 
chapter
 we
 give a geometric representation of  $H_{n}(B;\mathbb{L})$ classes, where $\mathbb{L}$ is the $4$-periodic surgery spectrum, by establishing a relationship between 
 the normal cobordism classes
${\mathcal{N}}^{H}_{n}(B,\partial)$ 
and the $n$-th $\mathbb{L}$-homology of $B$,
 representing
the elements of 
$H_{n}(B;\mathbb{L})$
by normal degree one maps with a reference map to $B$.   
More precisely, we prove that for every $n \ge 6$ and every finite complex $B,$ there exists a map
$\Gamma: H_n(B;\mathbb{L}) \longrightarrow \mathcal{N}^{H}_{n}(B,\partial).$
\end{abstract}
\subjclass[2020]{Primary 
55R20,
57P10,
57R65
57R67;
Secondary 
55M05,
55N99, 
57P05,
57P99}
\keywords{Generalized manifold,
 cell-like map,
normal degree one map, 
Steenrod $\mathbb{L}-$homology, 
Poincar\'{e} duality complex, 
periodic surgery spectrum $\mathbb{L}$,
geometric representation, 
 $\mathbb{L}$-homology class}
\date{}  
\maketitle

\section{Introduction}\label{s1}   

Giving a meaning
  to  algebraic objects  (e.g., homology classes  of generalized
homology theories) in terms of geometric objects, is a fundamental task in 
algebraic topology.
 The 
 relation
  between singular homology 
and classical  cobordism theory is  a well-known example.

 In this paper we shall consider 
 normal degree one maps\index{normal degree one map}
  $X^n \to M^n$
with a
 reference map\index{reference map}
$q: M^n \to B,$
where
$X^n$
is either a generalized or a topological $n$-manifold,
$M^n$ is a topological $n$-manifold, 
and
$B$
is a finite complex.
 We shall denote 
 normal cobordism classes\index{normal cobordism class}
  of such objects
$X^n \to M^n$
with
${\mathcal{N}}^{H}_{n}(B)$ 
(resp., 
${\mathcal{N}}_{n}(B)$). 
We emphasize that 
${\mathcal{N}}^{H}_{n}(B)$ 
(resp., 
${\mathcal{N}}_{n}(B)$)
should not be confused with the structure set in the case when 
$B$
is a
topological $n$-manifold (or a $PD_n$-complex).
It is easy to see that in our definition,
\[
\mathcal{N}_{n}(B) \cong \Omega_{n}(B \times G/TOP),
\]
where
$\Omega_{n}(\cdot)$
denotes  $n$-dimensional cobordisms. 

A  {\it 
generalized $n$-manifold\index{generalized manifold}
$X^n$ (with boundary)} is an $n$-dimensional
 Euclidean neighborhood retract\index{Euclidean neighborhood retract}
  (ENR) with the local homology
\[
H_* (X^n, X^n \setminus \{x\};\mathbb{Z}) \cong H_* (\mathbb{R}^{n}, \mathbb{R}^{n} \setminus \{0\};\mathbb{Z}),   \hbox{\rm{for every}} \ x \in X^n.
\]
The {\it boundary} $\partial X^n$ of $X^n$ is  defined by 
\[
\partial X^n = \{ x \in X^n: H_n (X^n, X^n \setminus \{x\};\mathbb{Z}) \cong 0 \}.
 \]
Then $\partial X^n$ is a generalized $(n-1)$-manifold without boundary (see {\sc Mitchell}~\cite{M}). Also,
 $X^n$ (resp. $\partial X^n$) is an $n$-dimensional (resp. $(n-1)$-dimensional) 
 Poincar\'{e} duality space\index{Poincar\'{e} duality space}. 
We shall consider generalized manifolds with boundary as pairs
of $(n+1)$-dimensional  
(resp. $n$-dimensional)
generalized manifolds $(W^{n+1},\partial W^{n+1}).$

For every
$n \ge 6,$
{\sc Quinn}~\cite{Qu83} has constructed an obstruction
$i(X^n)\in \mathbb{Z},$
which vanishes if there exists a 
cell-like map\index{cell-like map}
$M^n \to X^n$
of $M^n$ onto $X^n,$
where
$M^n$
is a closed topological $n$-manifold (for more on cell-like maps see the survey {\sc Mitchell-Repov\v{s}}~\cite{MR}).
The construction is done locally, in particular
$i(U)=i(X^n),$
for every open subset
$U\subset X^n.$
Because of its nice properties, one considers
$I(X^n)=1+8i(X^n),$
which we shall hereafter call the 
{\it 
Quinn index\index{Quinn index}
} of $X^n$.
In particular, 
$I(W^{n+1})=I(\partial W^{n+1}),$
which follows from the product property of $I(X^n)$. For more on this 
topic
 see the monograph
{\sc Cavicchioli-Hegenbarth-Repov\v{s}}~\cite{CHR}.

Our  main result (to be proved in Section~\ref{s4}) establishes a relationship between 
${\mathcal{N}}^{H}_{n}(B)$ 
and the $n$-th 
$\mathbb{L}$-homology\index{$\mathbb{L}$-homology}
of a 
finite complex\index{finite complex}
$B$, i.e.
$H_{n}(B;\mathbb{L}),$
where
$\mathbb{L}$
denotes the 
$4$-periodic surgery spectrum\index{surgery spectrum},
 i.e.
\[
\mathbb{L}_{0} \cong \mathbb{Z}\times G/TOP 
\]
(see {\sc Ranicki}~\cite{Ra92}).
Namely, we represent
the elements of 
$H_{n}(B;\mathbb{L})$
by normal degree one maps with a 
reference map
 to $B$, i.e.
as
the objects of the following type
\begin{equation}\label{d1}
\begin{tikzpicture}[baseline=-1.2cm, node distance=3.0cm, auto, ]
  \node (Xl) {$X^{n}$};
  \node (Fl) [right of=Xl] {$N^n$}; 
  \node (Fl1) [below of=Fl, yshift=1cm] {$B$};
  \draw[->, font=\small] (Xl) to node [midway, above]{} (Fl); 
  \draw[->, font=\small] (Fl) to node {$q$} (Fl1); 
\end{tikzpicture}
\end{equation} 
where $X^n$ is a generalized $n$-manifold with boundary and $N^n$ is a topological $n$-manifold with boundary, and
normal cobordism classes of such objects are denoted by
$\mathcal{N}^{H}_{n}(B,\partial)$
(resp.
$\mathcal{N}_{n}(B,\partial))$. 

\begin{theorem}\label{th2}
For every $n \ge 6$ and every finite complex $B,$ there exists a map
\[
\Gamma: H_n(B;\mathbb{L}) \longrightarrow \mathcal{N}^{H}_{n}(B,\partial).
\]
\end{theorem}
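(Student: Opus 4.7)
My plan is to exploit the identification $\mathbb{L}_0 \cong \mathbb{Z} \times G/TOP$ together with the $4$-periodicity of $\mathbb{L}$, and to convert $\mathbb{L}$-homology classes into normal maps via a Pontryagin--Thom construction carried out on a regular neighborhood of $B$.

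First, I would embed the finite complex $B$ as a subcomplex of a high-dimensional sphere $S^{n+k}$ and take a regular neighborhood $N^{n+k} \subset S^{n+k}$, which is a compact topological manifold with boundary that deformation retracts onto $B$. In particular, $H_n(B;\mathbb{L}) \cong H_n(N^{n+k};\mathbb{L})$, and Poincar\'e--Lefschetz duality in $\mathbb{L}$-homology (as developed by Ranicki) produces an isomorphism with $H^k(N^{n+k},\partial N^{n+k};\mathbb{L})$. Such a cohomology class is represented by a map $\phi \colon (N^{n+k},\partial N^{n+k}) \to (\mathbb{L}_k,\ast)$, and via $4$-periodicity together with $\mathbb{L}_0 \simeq \mathbb{Z}\times G/TOP$, this map records two pieces of data: a $G/TOP$-valued normal invariant and a $\mathbb{Z}$-valued integer label.

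Next, I would apply topological transversality to $\phi$ in order to produce a codimension-$k$ topological submanifold $N^n \subset N^{n+k}$ equipped with the restricted normal invariant and the restricted integer label, together with the reference map $q \colon N^n \hookrightarrow N^{n+k} \simeq B$. The Sullivan--Wall classification of normal invariants of topological manifolds, applied to the $G/TOP$-component, yields a topological normal degree one map $Y^n \to N^n$. Using Quinn's work on the resolution obstruction and the controlled surgery theory of Bryant--Ferry--Mio--Weinberger (both of which require exactly the hypothesis $n \geq 6$), I would then modify $Y^n$ to a generalized $n$-manifold $X^n$ admitting a normal degree one map to $N^n$ and having Quinn index $I(X^n) = 1 + 8i$, where $i \in \mathbb{Z}$ is the integer extracted from the $\mathbb{Z}$-component above. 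This produces a diagram $X^n \to N^n \to B$ of the form~\eqref{d1}, defining $\Gamma$ on representatives.

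The principal obstacle will be showing that $\Gamma$ is well-defined on classes: different choices of the embedding dimension $k$, of the regular neighborhood, and of the transverse representative, together with the non-uniqueness inherent in the Sullivan--Wall realization and the BFMW construction, must all yield diagrams representing the same element of $\mathcal{N}^H_n(B,\partial)$. This requires invoking a relative, parametrized version of the BFMW procedure over a cobordism interpolating any two choices. This is also the step where the use of the full $4$-periodic spectrum $\mathbb{L}$ (rather than its connective cover $\mathbb{L}\langle 1\rangle$) is essential, since the extra $\mathbb{Z}$-factor is precisely what allows the Quinn index of $X^n$ to be adjusted freely.
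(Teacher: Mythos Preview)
Your outline is coherent at the level of ideas, but it diverges from the paper's argument in one essential respect, and that divergence is exactly where your proof is in danger.

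The paper does not construct a single global $N^n$ by transversality and then invoke Bryant--Ferry--Mio--Weinberger to manufacture a generalized manifold with prescribed Quinn index. Instead it works simplexwise in Ranicki's model: an element $x\in H_n(B;\mathbb{L})$ is represented by a simplicial map $u:(V^m,W^m)\to(\mathbb{L}_{n-m},\ast)$ on a dual-cell model of $(S^m,S^m\setminus B)$, and the $\Omega$-spectrum identifications $\mathbb{L}_{n-m}\langle m-|\sigma|\rangle \cong \mathbb{L}_0\langle n-|\sigma|\rangle \cong L_0(\mathbb{Z})\times G/TOP\langle n-|\sigma|\rangle$ split $u(\sigma)$ into a $\mathbb{Z}$-component and a $G/TOP$-component. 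Lemma~\ref{3.1} and its corollaries show the $\mathbb{Z}$-component is constant on $x$ and yields the separate invariant $I:H_n(B;\mathbb{L})\to L_0(\mathbb{Z})$. The map $\Gamma$ itself is built \emph{only} from the second component: the $G/TOP$-values over the dual cells $D(\sigma,V^m)$ assemble to an adic normal degree one map over $B$, landing in $\mathcal{N}^H_n(B,\partial)$ simply because topological manifolds are in particular generalized manifolds. No exotic generalized manifold is ever produced.

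Your plan, by contrast, feeds the $\mathbb{Z}$-component into $\Gamma$ by calling on BFMW to realize a generalized $X^n$ with $I(X^n)=1+8i$. This is precisely the step the authors abandoned: the Acknowledgements explicitly state that they originally intended to apply Ferry--Pedersen \cite[Theorem~16.6]{FP} (the controlled-surgery input to BFMW) but withdrew that plan after being informed of the doubts surrounding it, and they cite the 2024 erratum \cite{BFMW}. So your reliance on ``the controlled surgery theory of Bryant--Ferry--Mio--Weinberger'' and on a ``relative, parametrized version of the BFMW procedure'' for well-definedness is, in the current state of the literature, a genuine gap rather than an alternative route. A secondary issue is that your transversality step is underspecified: $\mathbb{L}_k$ is not a manifold, so ``transversality to $\phi$'' does not directly cut out $N^n\subset N^{n+k}$; one needs either a Thom-space factorization or, as the paper does, the cellwise normal-invariant interpretation built into Ranicki's model.
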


\section{Geometric representation of the elements of $H_n(B;\mathbb{L})$}\label{s3}
Let $B$ be a finite complex, 
$n \ge 6,$
 and let 
$H_n(B;\mathbb{L})$
be the 
Steenrod homology\index{Steenrod homology}
 of $B$ with respect to the 
$4$-periodic surgery spectrum
$\mathbb{L}.$
The Steenrod homology behaves well on the category of all compact metric spaces - see {\sc Kahn-Kaminker-Schochet}~\cite{KKS77}.
The spectrum
$\mathbb{L}$
is algebraically defined. It is an
$\Omega$-spectrum\index{$\Omega$-spectrum},
 i.e.
it is a sequence of
$\Delta$-sets\index{$\Delta$-set} 
$\mathbb{L}_{q},$
where
$\mathbb{L}_{q}$
is homotopy equivalent to
$\Omega\mathbb{L}_{q-1}$
with
$\mathbb{L}_{0}\cong \mathbb{Z} \times G/TOP$
as
$\Delta$-sets. 
Each
$\mathbb{L}_{q}$
consists of a sequence
$\mathbb{L}_{q}<j>.$

We shall follow {\sc Ranicki}~\cite[Chapter 12]{Ra92} to define elements of 
$H_n(B;\mathbb{L}).$
To this end, we have to embed
$B \subset S^m,$ where $m$ is sufficiently large, and consider the 
dual complexes\index{dual complex}
 of $B$ and $S^m,$ 
which will be denoted
respectively as 
$\bar B$
and $\Sigma^m.$ 
However, we shall keep the notation
$B$ and $S^m$ also for the duals.

Let 
$(V^m,W^m)$ 
be a pair of simplicial complexes homotopy equivalent to the pair
$(S^m,S^m \setminus B).$
An element
$x \in H_n(B;\mathbb{L})$
is then given by a simplicial map
\[  
u:(V^m,W^m) \longrightarrow (\mathbb{L}_{n-m},*),
\]
which sends $W^m$ to $0$ and satisfies a certain cycle condition.
Clearly, $u$ is well-defined by 
$x,$
up to some equivalence (i.e. boundary)
condition.
Moreover, if
$\sigma \in B,$
then
\[
u(\sigma) \in \mathbb{L}_{n-m}< m- | \sigma | >,
\]
where
$| \sigma |$
denotes
 the dimension of $\sigma,$
 i.e. 
 $m-| \sigma |$
 is the dimension of its dual cell 
 $D(\sigma,V^m)$
 in
 $V^m.$
 
 As described in
 {\sc Nicas}~\cite[pp. 25-26]{Ni82},
 the $\Omega$-spectrum property implies the equivalence
 \[
 \mathbb{L}_q <j>  \  \longrightarrow  \  (\Omega \mathbb{L}_{q-1} )<j-1>  \   \cong  \  \mathbb{L}_{q-1} <j+1>.
 \]
 By iteration, one obtains the equivalence
 \[
 \mathbb{L}_{0} <n - | \sigma |>  \  \longrightarrow  \   \mathbb{L}_{n-m} <m- | \sigma | >
 \]
 as $\Delta$-sets.
 Note that
 \[
 \mathbb{L}_{0} <j> \cong \mathbb{Z} \times (G/TOP <j>),
 \]
 where
 $G/TOP<j>$
 denotes the singular complex of $j$-simplices of $G/TOP.$
 
 The face maps 
 \[
 \partial_{0}, ...,  \partial_{j}: \mathbb{L}_{0} <j>  \  \longrightarrow   \  \mathbb{L}_{0} <j-1>
 \]
 leave the $\mathbb{Z}$-components invariant.
 Since we are using dual complexes,
 the face maps can be written as follows
 \[
 \mathbb{L}_{n-m} <m-j>  \  \longrightarrow   \  \mathbb{L}_{n-m} <m-j-1>.
 \]
 
 \begin{lemma}\label{3.1}
 Suppose that  
 \[
 u : (V^m, W^m) \longrightarrow (\mathbb{L}_{n-m}, *)
 \]
 represents the element 
 $x \in H_n (B; \mathbb{L})$ as above,
 where
 $(S^m,S^m\setminus B)$
 is homotopy equivalent to the 
 simplicial model\index{simplicial model}
  for
 $(V^m,W^m).$  
 Then under the equivalence
 \[
 \mathbb{L}_{n-m} <m-j>  \  
 \cong  \ 
  \mathbb{L}_{0} <n-j> \
  \cong  \
  \mathbb{Z} \times (G/TOP <n-j>),
 \]
 $u(\sigma), u(\tau) \in \mathbb{L}_{n-m} $  
 determine the same
 $\mathbb{Z}$-component.
 \end{lemma}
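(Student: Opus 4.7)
The strategy is to show that the $\mathbb{Z}$-component of $u(\sigma)$ is locally constant along the face-relations of the dual cell decomposition, and therefore globally constant by connectedness. The crucial input is the statement just above the lemma: each face operator $\partial_i:\mathbb{L}_0\langle j\rangle\to\mathbb{L}_0\langle j-1\rangle$ leaves the $\mathbb{Z}$-factor invariant. My first step would be to transfer this invariance to the shifted levels: under the iterated $\Omega$-spectrum equivalences
\[
\mathbb{L}_{n-m}\langle m-j\rangle\;\simeq\;\mathbb{L}_0\langle n-j\rangle\;\cong\;\mathbb{Z}\times\bigl(G/TOP\langle n-j\rangle\bigr),
\]
the face operators on the left correspond to the face operators on the right, so they also leave the $\mathbb{Z}$-component fixed. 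This is essentially the bookkeeping already carried out in Nicas~\cite{Ni82}.

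Next, I would translate a face relation between simplices of $B$ into a face relation in the $\Delta$-set $\mathbb{L}_{n-m}$. If $\sigma$ is a proper face of $\tau$ in the simplicial model of $B$, then passing to duals in $V^m$ one has $D(\tau,V^m)\subset\partial D(\sigma,V^m)$, so $D(\tau)$ arises as an iterated face of $D(\sigma)$. Since $u$ is a simplicial map, this yields $u(\tau)=\partial_{i_1}\!\cdots\partial_{i_k}u(\sigma)$ for a suitable iterated face operator, and by the first step the $\mathbb{Z}$-components of $u(\sigma)$ and $u(\tau)$ coincide. Then, for arbitrary simplices $\sigma,\tau$ of $B$, I would connect them by a zig--zag of face relations $\sigma=\sigma_0,\tau_1,\sigma_1,\tau_2,\ldots,\sigma_k=\tau$, with each consecutive pair satisfying the face relation; constancy of the $\mathbb{Z}$-component at every step gives the claim (in the disconnected case, one argues on each component of $B$ separately).

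The main obstacle I anticipate is not the combinatorial core of the argument, which is really just a locally-constant-implies-constant observation, but the careful identification of the $\mathbb{Z}$-component across the shifts $q\mapsto q-1$, $j\mapsto j+1$. One must check that the $\mathbb{Z}$-factor picked out of $\mathbb{L}_{n-m}\langle m-|\sigma|\rangle$ by the equivalence with $\mathbb{Z}\times(G/TOP\langle n-|\sigma|\rangle)$ is independent of $|\sigma|$ in a compatible-with-face-operators sense; once this is made precise using the explicit $\Delta$-set models of Ranicki~\cite{Ra92} and Nicas~\cite{Ni82}, the lemma follows.
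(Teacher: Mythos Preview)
Your argument is correct and matches the paper's approach: the paper's proof is precisely your Steps~1--2, packaged as the commutative diagram asserting that the dual face map $\delta$ (coming from $\sigma\prec\tau$) commutes with the projection $\mathbb{L}_0\langle n-|\cdot|\rangle\to\mathbb{Z}$ through the equivalence $\mathbb{L}_{n-m}\langle m-|\cdot|\rangle\simeq\mathbb{L}_0\langle n-|\cdot|\rangle$. Note that the paper only treats the face-related case $\sigma\prec\tau$ in Lemma~\ref{3.1} itself; your Step~3 zig--zag extension to arbitrary $\sigma,\tau$ is deferred in the paper to Corollary~\ref{3.2} and the beginning of Section~\ref{s4}, so you have folded slightly more into the lemma than the authors do, but the content is the same.
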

 
 \begin{proof}
 The subface  
 $\sigma \prec \tau \in B$
 is a certain composition of face maps
 denoted by $\partial$,
 with the dual $\delta,$
 \[ 
 D(\tau, V^m) \prec  D(\sigma, V^m).
 \]
 
 The assertion of Lemma~\ref{3.1} now follows from the commutativity of the following diagram  
 \begin{equation}\label{d2}
\begin{tikzpicture}[baseline=-3.0cm,node distance=5.5cm, auto]
  \node (Xl) {$u(\tau) \in \mathbb{L}_{n-m}<m-| \tau |>$};
  \node (Fl) [right of=Xl] {$\mathbb{L}_{0}<n-| \tau |>$};
 \node (FFl) [right of=Fl] {};
  \node (Xl1) [below of=Xl] {$u(\sigma) \in \mathbb{L}_{n-m}<m-| \sigma |>$};
  \node (Fl1) [below of=Fl] {$\mathbb{L}_{0}<n-| \sigma |>$};
  \node (X) [node distance=3cm, below of=FFl] {$\mathbb{Z}$};  
  \draw[->, font=\small] (Xl) to node {} (Fl);
  \draw[->, font=\small] (Fl) to node {$pr$} (X);
  \draw[->, font=\small] (Xl1) to node {$\delta$} (Xl);  
  \draw[->, font=\small] (Fl1) to node [swap] {$pr$} (X);
  \draw[->, font=\small] (Xl1) to node {} (Fl1);
\end{tikzpicture}
\end{equation} 
 \end{proof} 
 \begin{corollary}\label{3.2}
 Consider any pair of simplices 
 $\sigma, \tau \in B$
 of 
 the 
 simplicial complex  $B$
 such that 
 $\sigma \cap \tau \ne \emptyset.$
 Then
 $u(\sigma), u(\tau) \in \mathbb{L}_{n-m} $  
determine the same 
$\mathbb{Z}$-component.\qed 
 \end{corollary}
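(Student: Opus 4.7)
The plan is to reduce Corollary~\ref{3.2} directly to Lemma~\ref{3.1} by exploiting the combinatorial structure of a simplicial complex. The key geometric fact is that in any simplicial complex, if two simplices $\sigma$ and $\tau$ satisfy $\sigma \cap \tau \neq \emptyset$, then the intersection $\rho := \sigma \cap \tau$ is itself a simplex of $B$, and is a common face of both $\sigma$ and $\tau$. This will let me bridge any pair of intersecting simplices through a shared subface.

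First I would set $\rho = \sigma \cap \tau$, observe that $\rho \prec \sigma$ and $\rho \prec \tau$ in $B$, and then apply Lemma~\ref{3.1} twice, once to the pair $(\rho,\sigma)$ and once to the pair $(\rho,\tau)$. Each application yields that the projections of $u(\rho)$ and $u(\sigma)$ (respectively $u(\rho)$ and $u(\tau)$) to $\mathbb{Z}$ under the composition
\[
\mathbb{L}_{n-m}\langle m - |\cdot|\rangle \;\cong\; \mathbb{L}_{0}\langle n - |\cdot|\rangle \;\xrightarrow{pr}\; \mathbb{Z}
\]
agree. Concatenating these two equalities, the $\mathbb{Z}$-components of $u(\sigma)$ and $u(\tau)$ must coincide, which is the assertion of the corollary.

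The only subtlety worth checking is that the commutative diagram~\eqref{d2} from Lemma~\ref{3.1} genuinely applies when $\rho$ is the intersection rather than a priori one of the two given simplices; but since the lemma is stated for any subface relation, and both $\rho \prec \sigma$ and $\rho \prec \tau$ are subface relations in $B$ corresponding to compositions of face maps in the dual-cell picture, this is automatic. There is no real obstacle: the whole argument is a short transitivity step, using the simplicial-complex property that non-empty intersections are faces as the single non-formal input.
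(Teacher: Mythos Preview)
Your argument is correct and is exactly the intended deduction: the paper states Corollary~\ref{3.2} with only a \qed, treating it as immediate from Lemma~\ref{3.1}, and your transitivity step through the common face $\rho = \sigma \cap \tau$ is precisely the one-line justification being suppressed. There is nothing to add.
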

 Moreover, Lemma~\ref{3.1} also implies the following corollary.
 \begin{corollary}\label{3.3}
 Suppose that  
 \[
 u, u': (V^m, W^m) \longrightarrow (\mathbb{L}_{n-m}, *)
 \]
 represent the same element 
 $x \in H_n (B; \mathbb{L}),$
 where
 $(S^m,S^m\setminus B)$
 is homotopy equivalent to the simplicial model for
 $(V^m,W^m).$
 Then for every
 simplex
 $\sigma \in B,$ 
 the elements
 $u(\sigma), u'(\sigma)    \in \mathbb{L}_{n-m} $  
determine the same 
$\mathbb{Z}$-component. 
 \end{corollary}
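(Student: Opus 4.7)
The plan is to reduce Corollary~\ref{3.3} to Lemma~\ref{3.1} by representing the equivalence between $u$ and $u'$ as a single chain on a simplicial model of $(V^m \times I, W^m \times I)$, so that $u$ and $u'$ appear as restrictions of this chain to the two ends of the cylinder.

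First, I would unfold the condition that $u$ and $u'$ represent the same class $x \in H_n(B;\mathbb{L})$. Following {\sc Ranicki}~\cite[Chapter 12]{Ra92}, this means that there is an $(m+1)$-dimensional $(V,W)$-chain $w$ with coefficients in $\mathbb{L}_{n-m}$ (shifted appropriately in the $\Omega$-spectrum indexing) whose boundary equals $u - u'$. Repackaged geometrically, one may take a simplicial subdivision $(V'^{m+1},W'^{m+1})$ of $(V^m \times I, W^m \times I)$ together with a simplicial map
\[
U : (V'^{m+1},W'^{m+1}) \longrightarrow (\mathbb{L}_{n-m}, *)
\]
satisfying the cycle condition and restricting to $u$ and $u'$ on the two copies $V^m \times \{0\}$ and $V^m \times \{1\}$, respectively.

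Next, I would apply the reasoning of Lemma~\ref{3.1}, or rather its immediate Corollary~\ref{3.2}, inside $V'^{m+1}$. For every simplex $\sigma \in B$, the two copies $\sigma_0 = \sigma \times \{0\}$ and $\sigma_1 = \sigma \times \{1\}$ are joined inside $V'^{m+1}$ by a triangulation of the prism $\sigma \times I$ into finitely many simplices, with each pair of consecutive simplices having nonempty intersection. Walking along this chain of pairwise intersecting simplices and invoking Corollary~\ref{3.2} at each step shows that the $\mathbb{Z}$-component of $U$ is constant along the entire prism, so in particular
\[
pr(u(\sigma)) \ = \ pr(U(\sigma_0)) \ = \ pr(U(\sigma_1)) \ = \ pr(u'(\sigma)),
\]
which is precisely the assertion of Corollary~\ref{3.3}.

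The main obstacle I anticipate is purely bookkeeping: precisely converting the algebraic boundary relation $\partial w = u - u'$ in Ranicki's $(V,W)$-chain complex into a bona fide simplicial map $U$ on a subdivision of $V^m \times I$ so that Lemma~\ref{3.1} applies verbatim. This requires choosing a compatible simplicial structure on $V^m \times I$ together with the natural transition coming from the $\Omega$-spectrum equivalences used to identify $\mathbb{L}_{n-m}<m-j>$ with $\mathbb{L}_{0}<n-j>$; both ingredients are standard, but they must be threaded so that the prism face operators commute with the spectrum face operators up to the same equivalences that make diagram~(\ref{d2}) commute.
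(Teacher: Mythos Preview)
Your argument is correct, but it is more elaborate than necessary, and the paper's proof shows how to sidestep precisely the bookkeeping obstacle you flag. Rather than subdividing the cylinder $V^m\times I$ simplicially and then walking along a chain of intersecting simplices via Corollary~\ref{3.2}, the paper stays in the $\Delta$-set world: the equivalence $u\sim u'$ is recorded by a single cobordism map $v:\Delta^1\times(V^m,W^m)\to(\mathbb{L}_{n-m},*)$, so for each $\sigma$ one already has an element $v(\Delta^1\times\sigma)\in\mathbb{L}_{n-m}\langle m-|\sigma|-1\rangle$ whose two face maps $\delta_0,\delta_1$ land on $u(\sigma)$ and $u'(\sigma)$. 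Since face maps leave the $\mathbb{Z}$-component invariant (this is exactly the content underlying Lemma~\ref{3.1} and diagram~(\ref{d2})), one concludes in a single step. Your route recovers the same invariance but at the cost of introducing a triangulation of $\sigma\times I$ and reproving Lemma~\ref{3.1}/Corollary~\ref{3.2} in the subdivided complex; the paper's approach shows that neither the subdivision nor the chain-walking is needed.
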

 
 \begin{proof}
 By hypothesis,
 $u \sim u',$
 so there is a cobordism map
 \[
 v:\Delta^1 \times (V^m, W^m)
 \longrightarrow
 (\mathbb{L}_{n-m}, *),
 \]
 i.e.
 \[
v (\Delta^1 \times \sigma) \in   \mathbb{L}_{n-m}<m-| \sigma | -1>, 
 \]
 where
 \[
 v (\partial_{0} \Delta^1 \times \sigma) = u(\sigma) 
 \quad
 \hbox{and}
 \quad
 v (\partial_{1} \Delta^1 \times \sigma) = u'(\sigma).
 \]
 Therefore under the face maps, we get the following diagram 
 \begin{equation}\label{d3}
\begin{tikzpicture}[baseline=-3.0cm, node distance=5.5cm, auto]
  \node (Xl) {$\mathbb{L}_{n-m}<m - | \sigma |>$};
  \node (Fl) [right of=Xl] {};
 \node (FFl) [right of=Fl] {};
  \node (Xl1) [below of=Xl] {$\mathbb{L}_{n-m}<m - | \sigma |>$};
  \node (Fl1) [below of=Fl] {};
  \node (X) [node distance=3cm, below of=FFl] {$\mathbb{L}_{0}<n- |\Delta^{1} \times \sigma |>$};
  \node (XX) [node distance=3cm, below of=Fl] {$\mathbb{L}_{n-m}<m- |\Delta^{1} \times \sigma |>$};
  \node (FFl1) [below of=FFl] {$\mathbb{Z}$};
  \draw[->, font=\small] (Xl) to node {$\delta_{0}$} (XX);  
  \draw[->, font=\small] (Xl1) to node {$\delta_{1}$} (XX);  
  \draw[->, font=\small] (X) to node {$pr$} (FFl1);
  \draw[->, font=\small] (XX) to node {} (X);
\end{tikzpicture}
\end{equation}  
 which proves the assertion of Corollary~\ref{3.3}.
 \end{proof}
 \section{Proof of Theorem~\ref{th2}}\label{s4}
 It follows from Lemma~\ref{3.1} and Corollaries~\ref{3.2} and \ref{3.3}
 that the 
 $\mathbb{Z}$-components depend only on 
 $x \in H_n (B; \mathbb{L}),$
 and that 
 $u(\sigma), u(\sigma')    \in \mathbb{L}_{n-m} $   
 define the same element if $\sigma$ and $\sigma'$
 can be connected by a chain
 $\sigma_1, ..., \sigma_r$
 of simplices
 such that
 $$\sigma_j \cap \sigma_{j+1}\neq \emptyset,
 \
 \hbox{for every}
 \
 j\in \{1,...,r-1 \}.$$
 
 This leads to the following theorem. 
 \begin{theorem}\label{3.4}
 Suppose that B is connected. Then the construction described above defines a map
 \[
 I: H_n (B; \mathbb{L}) \longrightarrow L_0 (\mathbb{Z}) = 1 + 8 \mathbb{Z}. 
 \]
 \end{theorem}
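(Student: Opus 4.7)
The plan is to use Lemma~\ref{3.1} together with Corollaries~\ref{3.2}--\ref{3.3} to extract a single integer $i(x)\in\mathbb{Z}$ from each class $x\in H_n(B;\mathbb{L})$, and then to rename it via the Quinn convention $I(x)=1+8\,i(x)$ so that the output lies in $1+8\mathbb{Z}=L_0(\mathbb{Z})$. Concretely, for a representative $u:(V^m,W^m)\to(\mathbb{L}_{n-m},*)$ of $x$ and any simplex $\sigma\in B$, I would let $i_\sigma(x,u)\in\mathbb{Z}$ denote the image of $u(\sigma)$ under the composite
\[
\mathbb{L}_{n-m}<m-|\sigma|>\ \cong\ \mathbb{L}_0<n-|\sigma|>\ \cong\ \mathbb{Z}\times(G/TOP<n-|\sigma|>)\ \xrightarrow{\mathrm{pr}}\ \mathbb{Z}
\]
provided by the $\Omega$-spectrum structure recalled in Section~\ref{s3}.

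The first step is to eliminate the dependence on the choice of $\sigma$. By Corollary~\ref{3.2}, $i_\sigma(x,u)=i_\tau(x,u)$ whenever $\sigma\cap\tau\neq\emptyset$. The hypothesis that $B$ is connected is exactly what is needed here: any two simplices $\sigma,\sigma'\in B$ can be joined by a chain $\sigma=\sigma_1,\ldots,\sigma_r=\sigma'$ of simplices in $B$ with $\sigma_j\cap\sigma_{j+1}\neq\emptyset$ for every $j\in\{1,\ldots,r-1\}$, so iterating Corollary~\ref{3.2} along the chain yields $i_\sigma(x,u)=i_{\sigma'}(x,u)$. The second step is to eliminate the dependence on the representative $u$, which is supplied verbatim by Corollary~\ref{3.3}: if $u'$ is a second simplicial representative of $x$, then $i_\sigma(x,u)=i_\sigma(x,u')$ for every $\sigma$. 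Setting $i(x):=i_\sigma(x,u)$ therefore gives a well-defined integer depending only on $x\in H_n(B;\mathbb{L})$, and declaring $I(x):=1+8\,i(x)$ produces the sought map $I:H_n(B;\mathbb{L})\to 1+8\mathbb{Z}$.

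The main obstacle is really only in the underlying bookkeeping: one must check that the iterated $\Omega$-spectrum equivalence $\mathbb{L}_{n-m}<m-|\sigma|>\cong\mathbb{L}_0<n-|\sigma|>$ is genuinely compatible with the face maps appearing in the proofs of Lemma~\ref{3.1} and Corollary~\ref{3.3}, so that the projection $\mathrm{pr}$ onto $\mathbb{Z}$ is insensitive to the subface operations appearing in the commutative squares of diagrams~\eqref{d2} and~\eqref{d3}. Once that compatibility is accepted, the assembly of $I$ is essentially formal; the identification $L_0(\mathbb{Z})=1+8\mathbb{Z}$ is simply the image of the Quinn index normalization $i\mapsto 1+8i$ recalled in Section~\ref{s1}, which matches the convention that a topologically resolvable generalized manifold contributes $I=1$ rather than $i=0$.
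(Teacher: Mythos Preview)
Your proposal is correct and follows essentially the same argument as the paper: the paper's proof is the short paragraph immediately preceding Theorem~\ref{3.4}, which invokes Lemma~\ref{3.1} and Corollaries~\ref{3.2}--\ref{3.3} together with the chain-of-simplices argument afforded by connectedness of $B$, exactly as you outline. Your write-up is somewhat more explicit (in particular about the normalization $I(x)=1+8\,i(x)$), but the route is the same.
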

 
 \begin{remark}
 The $\mathbb{Z}$-component coming from the identification
 \[
 \mathbb{L}_{0} \cong \mathbb{Z} \times G/TOP
 \]
 is the $0$-dimensional signature invariant defind by {\sc Quinn}~\cite[Section 3.2]{Qu83}.
 \end{remark} 

The second component  
of the identification
\[
\mathbb{L}_{n-m}<m - | \sigma | >
\
\cong
\
\mathbb{L}_{0} < n - | \sigma | >
\
\cong
\
\mathbb{L}_{0} (\mathbb{Z}) \times (G/TOP < n - | \sigma | >)
\]
associates to 
$x \in H_n (B; \mathbb{L}),$
represented by the map,
where
$(V^m,W^m)$ 
is a pair of simplicial complexes homotopy equivalent to the pair
$(S^m,S^m \setminus B),$
\[
u: (V^m,W^m) \longrightarrow (\mathbb{L}_{n-m}, *),
\]
a family of adic normal degree one maps given by $u(\sigma)$ 
\begin{equation}\label{d4}
\begin{tikzpicture}[baseline=-1.0cm, node distance=4.5cm, auto]
\node (Xl) {};
\node (Fl) [right of=Xl] {};
\node (FFl) [right of=Fl] {};
\node (FFFl) [right of=FFl] {$V^m$};
\node (Xl1) [below of=Xl] {};
\node (Fl1) [below of=Fl] {};
\node (FFFl1) [below of=FFFl] {$B$};
\node (XXX) [node distance=3cm, below of=Xl] {$M^{n}_{\sigma}$};
\node (X) [node distance=3cm, below of=FFl] {$D(\sigma, V^m)$};
\node (XX) [node distance=3cm, below of=Fl] {$N^{n- | \sigma |}_{\sigma}$};
\node (FFl1) [below of=FFl] {};
\draw[->, font=\small] (X) to node {} (FFFl1);
\draw[->, font=\small] (X) to node {$incl.$} (FFFl); 
\draw[->, font=\small] (XXX) to node {} (XX);  
\draw[->, font=\small] (FFFl1) to node [swap] {$incl.$} (FFFl);
\draw[->, font=\small] (XX) to node {} (X);
\end{tikzpicture}
\end{equation}  
therefore it defines a map
\[
\Gamma: H_n(B;\mathbb{L}) \longrightarrow \mathcal{N}^{H}_{n}(B,\partial).
\]
Similarly, one obtains a map
\[
\Gamma^{+}: H_n(B;\mathbb{L}^{+}) \longrightarrow \mathcal{N}_{n}(B,\partial).
\]
Clearly,
the following diagram commutes
\begin{equation}\label{d5}
\begin{tikzpicture}[baseline=-2.0cm, node distance=4.5cm, auto]
  \node (Xl) {$H_n(B;\mathbb{L}^{+})$};
  \node (Fl) [right of=Xl] {$\mathcal{N}_{n}(B,\partial)$};
  \node (Xl1) [below of=Xl] {$H_n(B;\mathbb{L})$};
  \node (Fl1) [below of=Fl] {$\mathcal{N}^{H}_{n}(B,\partial)$};
  \draw[->, font=\small] (Xl) to node {$\Gamma^{+}$} (Fl);
  \draw[->, font=\small] (Xl) to node {} (Xl1);
  \draw[->, font=\small] (Fl) to node {} (Fl1);
  \draw[->, font=\small] (Xl1) to node {$\Gamma$} (Fl1);
\end{tikzpicture}
\end{equation}

\begin{summary}
For every $n \ge 6,$
one can construct the maps 
 \[
 I: H_n (B; \mathbb{L}) \longrightarrow L_0 (\mathbb{Z}) = 1 + 8 \mathbb{Z} 
 \]
and
\[
\Gamma: H_n(B;\mathbb{L}) \longrightarrow \mathcal{N}^{H}_{n}(B,\partial)
\]
via the simplicial equivalence
\[
\mathbb{L}_{n-m}<m - j >
\
\cong
\
\mathbb{L}_{0} < n - j >
\
\cong
\
\mathbb{L}_{0} (\mathbb{Z}) \times (G/TOP < n - j >).
\]

\end{summary}
This completes the proof of Theorem~\ref{th2}. \qed

\section*{Acknowledgements}
This research was supported by the Slovenian Research and Innovation Agency grants P1-0292, J1-4031, J1-4001,  N1-0278, N1-0114, and N1-0083. We thank John Bryant, Wolfgang L\"{u}ck,  Washington Mio, Shmuel Weinberger, and Michael Weiss for updating us regarding the doubts about the validity of {\sc Ferry-Pedersen} \cite[Theorem 16.6]{FP},
 which  in the last three decades was applied by many authors
(for details see e.g. {\sc Bryant-Ferry-Mio-Weinberger}~\cite{BFMW}),  and
which we also wanted to apply in our original plan.
 We thank the 
 editors 
 and 
 referees for comments and suggestions.

\end{document}